\numberwithin{equation}{section}
                        \theoremstyle{plain}
\newcommand\no[1]{}
\newtheorem{theorem}{Theorem}[section]
\newtheorem{thm}{Theorem}
\newtheorem{lemma}[theorem]{Lemma}
\newtheorem{proposition}[theorem]{Proposition}
\theoremstyle{definition}
\newcommand{\la}{\langle}
\newcommand{\ra}{\rangle}
\def\BC{\mathbb C}
\def\BZ{\mathbb Z}
\def\BR{\mathbb R}
\def\la{\langle}
\def\ra{\rangle}
\def\be { \begin{equation} }
\def\ee { \end{equation} }
\begin{document}

\title[non LO surgeries on L-space twisted torus knots]
{Non left-orderable surgeries on L-space twisted torus knots}

\author{Anh T. Tran}

\begin{abstract}
We show that if $K$ is an L-space twisted torus knot $T^{l,m}_{p,pk \pm 1}$ with $p \ge 2$, $k \ge 1$, $m \ge 1$ and $1 \le l \le p-1$, then the fundamental group of the $3$-manifold obtained by $\frac{r}{s}$-surgery along $K$ is not left-orderable whenever $\frac{r}{s} \ge 2 g(K) -1$, where $g(K)$ is the genus of $K$.
\end{abstract}

\thanks{2000 {\it Mathematics Subject Classification}. Primary 57M27, Secondary 57M25.}

\thanks{{\it Key words and phrases.\/} Dehn surgery, left-orderable, L-space, twisted torus knot.}

\address{Department of Mathematical Sciences, The University of Texas at Dallas, Richardson, TX 75080, USA}
\email{att140830@utdallas.edu}

\maketitle

\section{Introduction}

Heegaard Floer homology is a package of 3-manifold invariants introduced by Ozsvath and Szabo \cite{OS}. Manifolds with minimal Heegaard Floer homology are called L-spaces.  More precisely, a rational homology 3-sphere $Y$ is an L-space if the hat version of its Heegaard Floer homology, denoted by $\widehat{HF}(Y)$, has rank equal to the order of $H_1(Y; \BZ)$. Since computing $\widehat{HF}$ is difficult, it is desirable to know if there are characterizations of L-spaces which do not refer to Heegaard Floer homology. The L-space conjecture of Boyer, Gordon and Watson \cite{BGW} proposes such a characterization. It states that an irreducible rational homology 3-sphere is an L-space if and only if its fundamental group is not left-orderable. Here a nontrivial group $G$ is left-orderable if it admits a total ordering $<$ such that $g<h$ implies $fg < fh$ for all elements $f,g,h$ in $G$. 

Many hyperbolic L-spaces can be obtained via Dehn surgery. A knot $K$ in $S^3$ is called an L-space knot if it admits a positive Dehn surgery yielding an L-space. For an L-space knot $K$, it was shown in  \cite{OS} that the $\frac{r}{s}$-surgery of $K$ is an L-space if and only if $\frac{r}{s}\ge 2g(K)-1$, where $g(K)$ is the genus of $K$. In view of the L-space conjecture, one would expect that the fundamental group of the $\frac{r}{s}$-surgery of an L-space knot $K$ is not left-orderable if and only if $\frac{r}{s} \ge 2g(K)-1$. When an L-space knot $K$ is the $(-2,3,2n+1)$-pretzel knot (with $n \ge 3$) or the $(n-2)$-twisted $(3,3m+2)$-torus knot (with $n, m \ge 1$), it was shown in \cite{Nie, Tr} that  the fundamental group of the $\frac{r}{s}$-surgery of $K$ is not left-orderable if $\frac{r}{s} \ge 2g(K)-1$. 

We denote by $T^{l,m}_{p,q}$ the twisted torus knot obtained from the $(p,q)$-torus knot by
twisting $l$ strands $m$ full times. Determining all L-space knots among twisted torus knots is an open problem. However, when $q \equiv \pm 1 \pmod{p} $, this problem was already solved. For $p \ge 2$, $k \ge 1$, $m \ge 1$ and $1 \le l \le p-1$, it was shown in \cite{Va} that the twisted torus knot $T^{l,m}_{p, pk \pm 1}$ is an L-space knot if and only if $l=p-1$ or $l \in \{p-2, 2\}$ and $m=1.$ Some results on the non left-orderability for surgeries of  L-space twisted torus knots $T^{p-1,m}_{p, pk \pm 1}$ and $T^{p-2,1}_{p, pk \pm 1}$ were obtained in \cite{Jun, Nakae, CW, IT2015, IT2018, C}. In this paper, we generalize the above results. More precisely, we will show the following. 
  
\begin{thm} \label{main}
Let $K$ be an L-space twisted torus knot $T^{l,m}_{p,pk \pm 1}$ with $p \ge 2$, $k \ge 1$, $m \ge 1$ and $1 \le l \le p-1$. Then the fundamental group of the $3$-manifold obtained by $\frac{r}{s}$-surgery along $K$ is not left-orderable if $\frac{r}{s} \ge 2 g(K) -1.$
\end{thm}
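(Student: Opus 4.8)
The plan is to reduce to finitely many explicit families, reformulate left‑orderability as a statement about actions on the line, and then extract a sign contradiction from an explicit one‑relator presentation together with the hypothesis $\frac{r}{s}\ge 2g(K)-1$.

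By the classification of \cite{Va} recalled above, an L‑space knot among the $T^{l,m}_{p,pk\pm1}$ is one of $T^{p-1,m}_{p,pk\pm1}$, $T^{p-2,1}_{p,pk\pm1}$, or $T^{2,1}_{p,pk\pm1}$, so it is enough to treat these three families; moreover when $l=1$ (which forces $p=2$, or $p=3$ in the middle family) the knot $K$ is a torus knot and the statement follows from the classical description of the resulting Seifert fibered surgeries, so I may assume $l\ge 2$. For such a $K$ I would start from the positive braid word $(\sigma_1\cdots\sigma_{p-1})^{pk\pm1}(\sigma_1\cdots\sigma_{l-1})^{lm}\in B_p$ whose closure is $K$ and, via the Wirtinger procedure and Tietze transformations, produce a two‑generator one‑relator presentation
\[
\pi_1(S^3\setminus K)=\langle a,b\mid \Phi\,a=b\,\Phi\rangle,
\]
in which $a$ and $b$ are meridians (so $b=\Phi a\Phi^{-1}$) and $\Phi$ is an explicit \emph{positive} word in $a,b$. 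From this I would read off the $0$‑framed longitude $\lambda$ as a word in $a,b$ commuting with the meridian $\mu:=a$, and record the genus; since L‑space knots are fibered, $g(K)=\deg\Delta_K=\tfrac12\big[(p-1)(pk\pm1-1)+l(l-1)m\big]$. The crucial structural fact I want to extract from these computations is a \emph{sharp positivity property} of the longitude: whenever $a$ and $b$ are sent to fixed‑point‑free orientation‑preserving homeomorphisms of $\mathbb{R}$ that move every point in the same direction, the image of $\lambda\,\mu^{\,2g(K)-1}$ moves every point strictly in that same direction.

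Granting this, the proof goes by contradiction. Assume $G:=\pi_1(S^3_{r/s}(K))=\pi_1(S^3\setminus K)/\langle\!\langle\mu^r\lambda^s\rangle\!\rangle$ is left‑orderable with $\frac{r}{s}\ge 2g(K)-1$. A nontrivial left‑orderable group is infinite, so $G\ne\{1\}$, and hence $G$ acts on $\mathbb{R}$ by orientation‑preserving homeomorphisms with no global fixed point; composing with the surgery quotient gives $\tilde\rho\colon\pi_1(S^3\setminus K)\to\mathrm{Homeo}^+(\mathbb{R})$ with $\tilde\rho(\mu^r\lambda^s)=\mathrm{id}$ and no global fixed point, so $f:=\tilde\rho(\mu)=\tilde\rho(a)\ne\mathrm{id}$. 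Suppose first that $f$ has no fixed point; after conjugating by $x\mapsto-x$ if necessary, $f(x)>x$ for all $x$, and since $b$ is conjugate to $\mu$ in the knot group, $\tilde\rho(b)$ is conjugate to $f$ and also moves every point strictly to the right. By the structural fact, $\tilde\rho(\lambda\mu^{2g(K)-1})$ then moves every point strictly to the right, i.e.\ $\tilde\rho(\lambda)(x)>f^{-(2g(K)-1)}(x)$ for all $x$; applying this inequality repeatedly (using that $f^{-(2g(K)-1)}$ is increasing) gives $\tilde\rho(\lambda)^s(x)>f^{-(2g(K)-1)s}(x)$ for all $x$. But the surgery relation gives $\tilde\rho(\lambda)^s=f^{-r}$, so $f^{-r}(x)>f^{-(2g(K)-1)s}(x)$ for all $x$, which forces $r<(2g(K)-1)s$ and contradicts $\frac{r}{s}\ge 2g(K)-1$. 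There remains the case that $f$ has a fixed point; then $\mathrm{Fix}(f)$ is nonempty, closed, and invariant under $\tilde\rho(\lambda)$ (as $\lambda$ commutes with $\mu$), and I would use that meridians normally generate $\pi_1(S^3\setminus K)$ together with the relator $\Phi a=b\Phi$ to locate a point of $\mathrm{Fix}(f)$ fixed by all of $\tilde\rho(\pi_1(S^3\setminus K))$, contradicting the absence of a global fixed point. In either case $G$ is not left‑orderable, which is Theorem~\ref{main}.

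The step I expect to be the main obstacle is establishing the sharp positivity property of $\lambda$ in the fixed‑point‑free case. A crude argument, using only that $\lambda$ equals $\mu^{-N}$ times a positive word for some $N$, yields only the weaker conclusion under $\frac{r}{s}\ge N$ with $N$ strictly larger than $2g(K)-1$; obtaining the optimal exponent $2g(K)-1$ forces one to exploit the positivity of $\Phi$ and the conjugacy $b=\Phi a\Phi^{-1}$ (not merely the commutation of $\lambda$ with $\mu$), and this is where the three families must be analyzed separately. The fixed‑point case, though more elementary, will also need some care to run uniformly.
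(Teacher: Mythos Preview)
Your overall architecture---reduce to the three Vafaee families, pass to an action on $\mathbb{R}$ without global fixed point, split on whether the meridian acts freely, and extract a sign contradiction from a two--generator presentation together with $r/s\ge 2g(K)-1$---is the paper's architecture as well.  Where your proposal diverges, and where the real gap lies, is in the mechanism you call the ``sharp positivity property.''

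The paper does \emph{not} work with two meridian generators and does \emph{not} attempt to show directly that $\lambda\mu^{2g(K)-1}$ pushes every point right whenever $\mu$ does.  Instead it uses generators $x=\mu$ and a non--meridian $y$ (with homology class $(p-1)[x]$).  From the single relator alone it extracts, for any action with $x\alpha>\alpha$ for all $\alpha$, the inequalities $y\alpha>\alpha$ and, crucially, $(xy)^{j}x\,\alpha<y(xy)^{j}\alpha$ for a specific $j\in\{k-1,k\}$ (Lemmas~\ref{ineq-1}, \ref{ineq+1}).  The case--by--case computation in Proposition~\ref{key0} then proves, using only the knot--group relation, the inequality $x^{2g(K)}\lambda\,\alpha>y\alpha$; combining this with the surgery input $x\alpha\ge x^{2g(K)}\lambda\,\alpha$ yields $x\alpha>y\alpha$.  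The contradiction is then a two--line combinatorial argument from $(xy)^{j}x\,\alpha<y(xy)^{j}\alpha$ together with $y\alpha<x\alpha$.  The point is that the paper only needs $x^{2g(K)}\lambda\,\alpha>y\alpha$, not your stronger $x^{2g(K)}\lambda\,\alpha>x\alpha$ (equivalently $\lambda\mu^{2g(K)-1}\alpha>\alpha$); since $y\alpha<x\alpha$ is established only \emph{after} invoking the surgery, it is not clear your stronger claim holds for arbitrary actions of the knot group, and a naive attempt to run your route seems to land one exponent short, at $\lambda\mu^{2g(K)}\alpha>\alpha$.  Introducing the auxiliary comparison element $y$ is exactly what buys the sharp bound.

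Your fixed--point case is also too optimistic.  ``Meridians normally generate, so locate a global fixed point in $\mathrm{Fix}(f)$'' does not go through without using the explicit longitude word: in the paper this case is handled by showing directly that if $x\alpha=\alpha$ and $y\alpha>\alpha$ then $\lambda'\alpha>\alpha$, and for $T^{2,1}_{p,pk\pm1}$ this is genuinely delicate because $\lambda'$ contains negative exponents (see Lemma~\ref{long} and the first proposition of Section~\ref{pf}).
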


The paper is organized as follows. In Section \ref{prelim} we prove some preliminary lemmas on the knot groups of twisted torus knots $T^{l,m}_{p, pk \pm 1}$ and we compute the genera of the L-space knots among these knots. In Section \ref{pf} we use the results of the previous section to prove Theorem \ref{main}.

\section{Preliminary lemmas} \label{prelim}

For a knot $K \subset S^3$, the knot group of $K$ is defined to be the fundamental group of the complement of $K$ in $S^3$. We denote by $G^{l,m}_{p,q}$ the knot group of $T^{l,m}_{p, q}$. From now on we consider the twisted torus knots $T^{l,m}_{p,pk \pm 1}$ with $p \ge 2$, $k \ge 1$, $m \ge 1$ and $1 \le l \le p-1$.

\subsection{The case $T^{l,m}_{p,pk-1}$} \label{pre-1}
By \cite{C} $G^{l,m}_{p,pk-1}$ has a presentation with two generators $a,b$ and one relation
\begin{equation} \label{rel-ab}
a^{p-l+1} (z^{m} a)^{l-1} = b^{k(p-l+1)-1} (z^m b^k)^{l-1}
\end{equation}
where $z = b^{1 - k(p-l)} a^{p-l}.$ Moreover, $\mu = a^{-1}b^k$ is a meridian and the preferred longitude corresponding to $\mu$ is $\lambda = \mu^{- p(pk-1) - l^2 m}  \lambda'$ where $\lambda' = a^{p-l} (z^m a )^l.$ 

Let $x = \mu = a^{-1}b^k$ and $y=b^{1-k}a$. Then $b=yx$, $a = b^{k-1} y = (yx)^{k-1}y$ and $z= (yx)^{1 - k(p-l)} ( (yx)^{k-1}y)^{p-l}$. Moreover, $G^{l,m}_{p,pk-1}$ has a presentation with two generators $x,y$ and one relation
$$(yx)^{k-1} y ((yx)^{k-1} y)^{p-l} (z^{m}  (yx)^{k-1}y )^{l-1} = (yx)^k (yx)^{k(p-l)-1} (z^m (yx)^k)^{l-1}.$$
This relation is equivalent to 
\begin{equation} \label{rel-1}
((yx)^{k-1} y)^{p-l} (z^{m}  (yx)^{k-1}y )^{l-1} = x (yx)^{k(p-l)-1} (z^m (yx)^k)^{l-1}.
\end{equation}
By abelianizing $G^{l,m}_{p,pk-1}$, the homology class of $y$ is $p-1$ times that of $x$. 

Let $\text{Homeo}^+(\BR)$ be the group of order-preserving homeomorphisms of $\BR.$

\begin{lemma} \label{ineq-1}
Suppose $\rho: G^{l,m}_{p,pk-1} \to \text{Homeo}^+(\BR)$ is a homomorphism such that $\rho(x)\alpha > \alpha$ for all $\alpha \in \BR$. Then $\rho(y)\alpha > \alpha$ and 
\begin{eqnarray*}
\rho ( ( (yx)^{k-1} y )^{p-l} ) \alpha &>& \rho (  x (yx)^{k(p-l)-1}  ) \alpha, \\
\rho (  (yx)^{k-1} y^n ) \alpha &>& \rho (  x^n (yx)^{k-1} ) \alpha, \\
\rho (  y^n (xy)^{k-1} ) \alpha &>& \rho (  (xy)^{k-1}  x^n ) \alpha
\end{eqnarray*}
 for all $\alpha \in \BR$ and $n \ge 1$. 
 \end{lemma}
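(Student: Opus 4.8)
The plan is to rephrase everything via the pointwise partial order on $\mathrm{Homeo}^{+}(\BR)$: write $f\succ g$ if $f\alpha>g\alpha$ for all $\alpha$, and call $f$ \emph{positive} if $f\succ\mathrm{id}$. I will freely use that positive elements form a subsemigroup closed under conjugation, that $f\succ g$ implies $hf\succ hg$, $fh\succ gh$, $hfh^{-1}\succ hgh^{-1}$ and $f^{n}\succ g^{n}$ for $n\ge1$, and the cancellation principle: if $PQ=P'Q'$ and $Q\prec Q'$, then $P\succ P'$ (apply $P^{-1}$ to $P'=PQQ'^{-1}$). In this language $\rho(y)\alpha>\alpha$ says $\rho(y)$ is positive, and each displayed inequality says that a certain group element acts positively.

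The crux, and the step I expect to be the main obstacle, is showing $\rho(y)$ is positive, since only the defining relation \eqref{rel-1} links the hypothesis on $\rho(x)$ to $\rho(y)$. I would argue by contradiction: if $\rho(y)$ is not positive, the closed set $F=\{\alpha:\rho(y)\alpha\le\alpha\}$ is nonempty, and I would evaluate \eqref{rel-1}, rewritten as an identity $\rho(W)=\mathrm{id}$ in the free group on $x,y$, at a carefully chosen point $t$ (an endpoint of a component of $F$, or any point if $F=\BR$). Reading $W$ from right to left, each letter $x^{\pm1}$ moves the running point strictly right or left, while $\rho(y)$ behaves as dictated by membership in $F$ near $t$; the count of $x$-exponents against $y$-exponents on the two sides — governed by the fact, recorded just above the lemma, that the homology class of $y$ is $p-1$ times that of $x$ — should then force $\rho(W)t\ne t$, a contradiction. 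The technical nuisance is the conjugator $z=(yx)^{1-k(p-l)}\big((yx)^{k-1}y\big)^{p-l}$ and its power $z^{m}$, which interleave the positive letter $x$ with the not-yet-controlled $y$; one has to pin down the dynamics of $\rho(z)$ at the chosen point first, e.g. by an auxiliary induction on exponents or by taking $t$ extremal for $\rho(z)$ as well.

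Granting positivity of $\rho(x)$ and $\rho(y)$, the remaining inequalities go as follows. For the first, use the identity $\big((yx)^{k-1}y\big)^{p-l}=(yx)^{k(p-l)-1}z$ (the definition of $z$) to see \eqref{rel-1} as $P\,\rho\big((z^{m}a)^{l-1}\big)=P'\,\rho\big((z^{m}b^{k})^{l-1}\big)$ with $a=(yx)^{k-1}y$, $b=yx$, $P=\rho\big(((yx)^{k-1}y)^{p-l}\big)$ and $P'=\rho\big(x(yx)^{k(p-l)-1}\big)$; by the cancellation principle it suffices that $\rho\big((z^{m}a)^{l-1}\big)\prec\rho\big((z^{m}b^{k})^{l-1}\big)$, and this holds because the meridian relation $x=a^{-1}b^{k}$ gives $z^{m}b^{k}=(z^{m}a)x$, so $\rho(z^{m}b^{k})\succ\rho(z^{m}a)$, and taking $(l-1)$st powers preserves this (for $l=1$ both tails are trivial and \eqref{rel-1} gives the limiting equality). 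For the last two inequalities I would reduce general $n$ to $n=1$ by induction: assuming the $(n-1)$st case, compose with $\rho(y)$ on the appropriate side and insert the $n=1$ case (composed with $\rho(x^{n-1})$ on the other side) to obtain the $n$th case. The two $n=1$ base cases themselves I would establish in the same spirit as the $\rho(y)$ step, feeding a hypothetical failure point into \eqref{rel-1} (or \eqref{rel-ab}) and pushing it through to contradict positivity of $\rho(x)$ and $\rho(y)$; I expect these to be routine once the bookkeeping from the $\rho(y)$ step is in hand.
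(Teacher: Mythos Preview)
Your main strategic misstep is inverting the logical order: you treat positivity of $\rho(y)$ as the crux to be established first by a delicate endpoint argument, and then derive the three displayed inequalities ``granting positivity of $\rho(x)$ and $\rho(y)$.'' But look again at your own argument for the first inequality. The cancellation $P\succ P'$ from $PQ=P'Q'$ with $Q\prec Q'$ uses only that $\rho(z^{m}b^{k})=\rho(z^{m}a)\rho(x)\succ\rho(z^{m}a)$, and this needs nothing about $\rho(y)$. So the first inequality $\rho\big(((yx)^{k-1}y)^{p-l}\big)\succ\rho\big(x(yx)^{k(p-l)-1}\big)$ follows from the relation and $\rho(x)\succ\mathrm{id}$ alone. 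This is exactly how the paper proceeds, and once you have it the rest falls out with no contradiction arguments at all: since $\rho((yx)^{k})\succ\rho((yx)^{k-1}y)$ (again only $\rho(x)\succ\mathrm{id}$), one replaces the blocks $(yx)^{k}$ inside $x(yx)^{k(p-l)-1}=x(yx)^{k-1}\big((yx)^{k}\big)^{p-l-1}$ by $(yx)^{k-1}y$ to get $\rho\big(((yx)^{k-1}y)^{p-l}\big)\succ\rho\big(x(yx)^{k-1}((yx)^{k-1}y)^{p-l-1}\big)$, and cancelling the common right factor gives the $n=1$ case $\rho((yx)^{k-1}y)\succ\rho(x(yx)^{k-1})$. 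The general $n$ then follows by the induction you describe (the paper cites \cite[Lemma~2.4]{Tr} for this). Finally $\rho(y)\succ\mathrm{id}$ is a one-line consequence: $\rho((yx)^{k-1}y)\succ\rho(x(yx)^{k-1})\succ\rho((yx)^{k-1})$, so cancel $\rho((yx)^{k-1})$ on the left.

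Thus the ``main obstacle'' you identify---a direct dynamical argument for $\rho(y)\succ\mathrm{id}$ through the relation with its $z^{m}$ conjugators---is never needed, and your sketch of it (endpoint analysis on $F=\{\alpha:\rho(y)\alpha\le\alpha\}$, auxiliary control of $\rho(z)$) is left genuinely incomplete. The fix is simply to reorder: prove the first inequality as you already do, extract the $n=1$ case by the replacement trick above, run the induction, and read off $\rho(y)\succ\mathrm{id}$ at the end.
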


\begin{proof}
Since $\rho(x)\alpha > \alpha$ for all $\alpha \in \BR$ we have $\rho ( z^m (yx)^k ) \alpha  > \rho ( z^m (yx)^{k-1} y ) \alpha.$ The relation  \eqref{rel-1} then implies that $$\rho ( ( (yx)^{k-1} y )^{p-l} ) \alpha > \rho (  x (yx)^{k(p-l)-1}  ) \alpha$$
 for all $\alpha \in \BR$. Since $\rho (  (yx)^k ) \alpha  > \rho (  (yx)^{k-1} y ) \alpha$ we have
 $$\rho ( ( (yx)^{k-1} y )^{p-l} ) \alpha > \rho (  x (yx)^{k(p-l)-1}  ) \alpha>  \rho (  x (yx)^{k-1} ( (yx)^{k-1} y )^{p-l-1} ) \alpha.$$
 Hence $\rho ( (yx)^{k-1} y ) \alpha > \rho (  x (yx)^{k-1} ) \alpha $ for all $\alpha \in \BR$. By \cite[Lemma 2.4]{Tr}, this implies that 
\begin{eqnarray*}
\rho (  (yx)^{k-1} y^n ) \alpha &>& \rho (  x^n (yx)^{k-1} ) \alpha, \\
\rho (  y^n (xy)^{k-1} ) \alpha &>& \rho (  (xy)^{k-1}  x^n ) \alpha
\end{eqnarray*}
for all $\alpha \in \BR$ and $n \ge 1$. 

Finally, since $\rho ( (yx)^{k-1} y ) \alpha > \rho (  x (yx)^{k-1} ) \alpha >  \rho (   (yx)^{k-1} ) \alpha$ we have $\rho(y)\alpha > \alpha.$
\end{proof}

\begin{lemma} \label{genus-1}
If $T^{l,m}_{p,pk-1}$ is an L-space knot, then its genus is equal to $\frac{1}{2}l(l-1)m + \frac{1}{2} p(p-1)k-(p-1).$
\end{lemma}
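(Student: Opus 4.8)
The plan is to present $K=T^{l,m}_{p,pk-1}$ as the closure of an explicit positive braid on $p$ strands and to read off $g(K)$ from the associated fiber surface. Concretely, $T^{l,m}_{p,pk-1}$ is the closure of
$$\beta \;=\; (\sigma_1\sigma_2\cdots\sigma_{p-1})^{\,pk-1}\,(\sigma_1\sigma_2\cdots\sigma_{l-1})^{\,lm}\ \in\ B_p,$$
where the first factor is the standard $p$-strand braid whose closure is the torus knot $T(p,pk-1)$ and the second factor inserts $m$ full twists on the first $l$ of the $p$ strands. Since $p\ge 2$, $k\ge 1$, $m\ge 1$ and $1\le l\le p-1$, every letter of $\beta$ is a positive generator, so $\beta$ is a positive braid; its permutation is still a single $p$-cycle (the full-twist factor is a pure braid), so $\widehat{\beta}$ is a knot.

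First I would apply Seifert's algorithm to this braid diagram. The resulting Seifert surface $\Sigma$ has exactly $p$ Seifert circles and exactly
$$c \;=\; (p-1)(pk-1) + l(l-1)m$$
bands, one for each letter of $\beta$; hence $\chi(\Sigma)=p-c$. Next, since $\beta$ is positive (hence homogeneous), the closure $\widehat{\beta}=K$ is fibered with fiber $\Sigma$ by Stallings' theorem, and a fiber surface always realizes the Seifert genus; equivalently, one may invoke that an L-space knot is fibered, with fiber $\Sigma$. As $K$ is a knot, $\Sigma$ has one boundary component, so $\chi(\Sigma)=1-2g(K)$, and therefore
$$2g(K) \;=\; 1-\chi(\Sigma) \;=\; c-p+1 \;=\; (p-1)(pk-1)+l(l-1)m-p+1 .$$

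It then remains only to simplify. Grouping, $(p-1)(pk-1)-(p-1)=(p-1)(pk-2)=p(p-1)k-2(p-1)$, so $2g(K)=p(p-1)k-2(p-1)+l(l-1)m$, i.e.
$$g(K)\;=\;\tfrac12 l(l-1)m+\tfrac12 p(p-1)k-(p-1),$$
as claimed. (Alternatively, since $K$ is an L-space knot one could compute $g(K)$ as the top degree of $\Delta_K(t)$, but the braid computation is cleaner.) This lemma is essentially a bookkeeping exercise; the one point deserving care is checking that the braid word $\beta$ written above is genuinely the twisted torus knot governed by the presentation \eqref{rel-ab} taken from \cite{C} — that is, that ``twisting $l$ strands $m$ full times'' contributes precisely $l(l-1)m$ positive crossings and leaves the braid index equal to $p$, consistently with the stated meridian $\mu=a^{-1}b^k$ and longitude $\lambda=\mu^{-p(pk-1)-l^2m}\lambda'$. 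Once these conventions are aligned, no further difficulty arises.
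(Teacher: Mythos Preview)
Your argument is correct and takes a genuinely different route from the paper. The paper computes the Alexander polynomial of $T^{l,m}_{p,pk-1}$ via Fox free calculus from the two-generator presentation \eqref{rel-ab}, finds its degree to be $l(l-1)m+p(p-1)k-(2p-2)$, and then invokes the L-space hypothesis (via \cite{OS}) to equate $\deg\Delta_K$ with $2g(K)$. Your approach instead uses the explicit positive braid presentation of the twisted torus knot, applies Stallings' theorem to see that the Bennequin surface is a fiber, and reads off $g(K)$ from an Euler characteristic count.

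Your method is cleaner and in fact proves more: since positive braid closures are fibered regardless of the L-space condition, your computation shows that the genus formula holds for \emph{every} $T^{l,m}_{p,pk-1}$ with $p\ge2$, $k\ge1$, $m\ge1$, $1\le l\le p-1$, not only for the L-space ones. The paper's approach, by contrast, stays entirely within the algebraic framework already set up (the presentation from \cite{C}) and avoids having to reconcile the braid description with the group-theoretic one. You correctly flag this reconciliation as the one point requiring care; it is routine but not literally contained in the paper, so in the paper's internal logic the Fox calculus route is more self-contained, while yours is more conceptual. Either way, the arithmetic is the same: $2g(K)=(p-1)(pk-1)+l(l-1)m-(p-1)=p(p-1)k+l(l-1)m-2(p-1)$.
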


\begin{proof}
For a Laurent polynomial $f(t) \in \BC[t^{\pm 1}]$, the degree of $f$ is defined to be the difference of the highest degree and the lowest degree of $f$. 

By \cite{OS}, the degree of the  Alexander polynomial of an L-space knot is twice the knot genus. Hence, to prove Lemma \ref{genus-1} it suffices to show that the degree of the Alexander polynomial of $T^{l,m}_{p,pk-1}$ is equal to $l(l-1)m +  p(p-1)k-(2p-2).$ We will compute the Alexander polynomial  via Fox's free calculus. 

Since $z = b^{1 - k(p-l)} a^{p-l}$, the relation \eqref{rel-ab} is equivalent to $za (z^m a)^{l-1} = b^k (z^m b^k)^{l-1}.$ Let $r_1 = za(z^m a)^{l-1}$ and $r_2 = ax(z^max)^{l-1}.$ Since $b^k = ax$ we have  $G^{l,m}_{p,pk-1} = \la x,y \mid r_1 = r_2 \ra$. The Alexander polynomial of $T^{l,m}_{p,pk-1}$ can be computed via the formula
\begin{equation} \label{Alex}
\frac{\Delta_{T^{l,m}_{p,pk-1}}(t)}{t-1}  =  \frac{\Phi(\frac{\partial r_1}{\partial x} - \frac{\partial r_2}{\partial x})}{\Phi(y)-1}
\end{equation}
where $\Phi: G^{l,m}_{p,pk-1} \to \BZ \cong \la t \ra$ is the abelianization. See, for example, \cite[Section 3]{Tr}. 

Since $\Phi(x)=t$ and $\Phi(y) = t^{p-1}$, we have $\Phi(a) = t^{kp-1}$, $\Phi(b) = t^p$ and $\Phi(z) = \Phi( b^{1 - k(p-l)} a^{p-l}) = t^l.$ We will use the following formulas $$\frac{\partial u v}{\partial x} = \frac{\partial u}{\partial x} + u \frac{\partial v}{\partial x}, \quad \frac{\partial u^{-1}}{\partial x} = -u^{-1} \frac{\partial u }{\partial x} \quad \text{and} \quad  \frac{\partial u^n}{\partial x} = ( 1 + u + \cdots + u^{n-1}) \frac{\partial u }{\partial x}$$
for $u,v \in G^{l,m}_{p,pk-1}$ and $n \ge 1$. Note that $\frac{\partial x }{\partial x} =1$ and $\frac{\partial y}{\partial x} =0$.

For $n \ge 0$ and $u \in G^{l,m}_{p,pk-1} $ we let $\delta_n(u) = \sum_{i=0}^n u^i.$ By direct calculations we have $\Phi(\frac{\partial a}{\partial x}) = \Phi(\delta_{k-2}(yx) y) = \frac{t^{p(k-1)}-1}{t^p-1} t^{p-1}$, $\Phi(\frac{\partial b}{\partial x}) = \Phi(y) = t^{p-1}$ and 
\begin{eqnarray*}
\Phi \Big( \frac{\partial z}{\partial x} \Big) &=& \Phi \Big( - b^{1-k(p-l)} \delta_{k(p-l)-2}(b) \frac{\partial b}{\partial x} + b^{1-k(p-l)} \delta_{p-l-1}(a) \frac{\partial a}{\partial x} \Big) \\
&=& t^{p-kp(p-l)} \Big( \frac{t^{(p-l)(kp-1)} -1}{t^{kp-1} -1} \cdot \frac{t^{p(k-1)} -1}{t^{p} -1}
-\frac{t^{kp(p-l)-p} -1}{t^{p} -1} \Big) t^{p-1}.
\end{eqnarray*}

Since
\begin{eqnarray*}
\frac{\partial r_1}{\partial x} &=& \frac{\partial z}{\partial x} + z \frac{\partial a}{\partial x} + za \delta_{l-2}(z^m a) \Big(\delta_{m-1}(z) \frac{\partial z}{\partial x} + z^m \frac{ \partial a}{\partial x} \Big),\\
\frac{\partial r_2}{\partial x} &=& \frac{\partial a}{\partial x} + a + ax 
 \delta_{l-2}(z^max) \Big(\delta_{m-1}(z) \frac{\partial z}{\partial x} + z^m \frac{\partial a}{\partial x}  + z^m a \Big),
\end{eqnarray*}
we have
\begin{eqnarray*}
\frac{\partial r_1}{\partial x} - \frac{\partial r_2}{\partial x} 
&=& \Big( z + za \delta_{l-2}(z^m a) z^m - 1 - ax 
 \delta_{l-2}(z^max) z^m  \Big) \frac{\partial a}{\partial x} \\
&& + \, \Big( 1 + za \delta_{l-2}(z^m a)  \delta_{m-1}(z) - ax 
 \delta_{l-2}(z^max)  \delta_{m-1}(z) \Big)  \frac{\partial z}{\partial x} \\
 && - \, a - ax \delta_{l-2}(z^max)  z^m a.
\end{eqnarray*} 

By comparing degrees, the highest degree of the  Laurent polynomial $\Phi \big( \frac{\partial r_1}{\partial x} - \frac{\partial r_2}{\partial x}  \big)$ is $m l(l-1) + kp l - 1$, which is the highest degree of $\Phi(ax \delta_{l-2}(z^max)  z^m a)$. Moreover, the lowest degree is $-kp(p-l-1)+p-1$, which is the lowest degree of $\Phi \big( \frac{\partial z}{\partial x} \big)$. Hence, the degree of $\Phi \big( \frac{\partial r_1}{\partial x} - \frac{\partial r_2}{\partial x}  \big)$ is $l(l-1)m + p(p-1)k-p$. Equation \eqref{Alex} then implies that the degree of the Alexander polynomial of $T^{l,m}_{p,pk-1}$ is equal to 
$l(l-1)m + p(p-1)k-(2p-2).$
\end{proof}

\subsection{The case $T^{l,m}_{p,pk+1}$} \label{pre+1}
By \cite{C} $G^{l,m}_{p,pk+1}$ has a presentation with two generators $a,b$ and one relation
\begin{equation} \label{rel-ab+1}
a (z^{m} a)^{l-1} a^{p-l} = b^k (z^m b^k)^{l-1} b^{k(p-l)+1}
\end{equation}
where $z = b^{k(p-l)+1} a^{l-p}.$ Moreover, $\mu = b^{-k}a$ is a meridian and the preferred longitude corresponding to $\mu$ is $\lambda = \mu^{- p(pk+1) - l^2 m}  \lambda'$ where $\lambda'=(z^m a )^l a^{p-l}.$ 

Let $x = \mu = b^{-k}a$ and $y=a^{-1}b^{k+1}$. Then $b=xy$ and $a = b^{k} x = (xy)^{k}x.$ Moreover, $G^{l,m}_{p,pk+1}$ has a presentation with two generators $x,y$ and one relation
$$ (xy)^{k}x ( z^{m}  (xy)^{k}x )^{l-1} ((xy)^{k}x)^{p-l}  = (xy)^k ( z^m (xy)^k)^{l-1} (xy)^{k(p-l)+1}$$
where $z= (xy)^{k(p-l)+1} ( (xy)^{k}x)^{l-p}.$ The group relation is equivalent to 
\begin{equation} \label{rel+1}
(  x z^m (xy)^k )^{l-1} x( (xy)^k x )^{p-l}  = ( z^{m} (xy)^{k} )^{l-1} (xy)^{k(p-l)+1}.
\end{equation}

By abelianizing $G^{l,m}_{p,pk+1}$, the homology class of $y$ is $p-1$ times that of $x$. Similar to the previous case, we have the following lemmas.

\begin{lemma} \label{ineq+1}
Suppose $\rho: G^{l,m}_{p,pk+1} \to \text{Homeo}^+(\BR)$ is a homomorphism such that $\rho(x)\alpha > \alpha$ for all $\alpha \in \BR$. Then $\rho(y)\alpha > \alpha$ and
\begin{eqnarray*}
\rho ( (xy)^{k(p-l)+1} ) \alpha &>& \rho (  x ( (xy)^k x )^{p-l} ) \alpha, \\
\rho (  (yx)^{k} y^n ) \alpha &>& \rho (  x^n (yx)^{k} ) \alpha, \\
\rho (  y^n (xy)^{k} ) \alpha &>& \rho (  (xy)^{k}  x^n ) \alpha
\end{eqnarray*}
 for all $\alpha \in \BR$ and $n \ge 1$. 
 \end{lemma}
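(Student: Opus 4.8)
The plan is to follow the proof of Lemma \ref{ineq-1} closely, with \eqref{rel+1} in place of \eqref{rel-1}, using throughout that if $\rho(x)\alpha>\alpha$ for all $\alpha$ then $\rho(xw)\alpha>\rho(w)\alpha$ and $\rho(wx)\alpha>\rho(w)\alpha$ for every word $w$, and that $f(\alpha)>g(\alpha)$ for all $\alpha$ implies $f^n(\alpha)>g^n(\alpha)$ for all $\alpha$ and $n\ge1$ (for $f,g\in\text{Homeo}^+(\BR)$, by composing and using monotonicity).

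First I would derive the first displayed inequality. From $\rho(x)\alpha>\alpha$ we get $\rho\big(xz^m(xy)^k\big)\alpha>\rho\big(z^m(xy)^k\big)\alpha$, hence $\rho\big((xz^m(xy)^k)^{l-1}\big)\alpha>\rho\big((z^m(xy)^k)^{l-1}\big)\alpha$ for all $\alpha$. Substituting into \eqref{rel+1} and arguing as in the proof of Lemma \ref{ineq-1} --- the correction factors $(xz^m(xy)^k)^{l-1}$ and $(z^m(xy)^k)^{l-1}$ now appear on the left, so one cancels $\rho\big((z^m(xy)^k)^{l-1}\big)$ --- gives
\[
\rho\big((xy)^{k(p-l)+1}\big)\alpha>\rho\big(x((xy)^kx)^{p-l}\big)\alpha
\]
for all $\alpha$. (When $l=1$ the correction factors are trivial and \eqref{rel+1} gives this as an equality; that torus-knot case carries through harmlessly below.)

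The core step is to reduce this to the single-block inequality $\rho\big((yx)^ky\big)\alpha>\rho\big(x(yx)^k\big)\alpha$. Using $(xy)^n=x(yx)^{n-1}y$ and $(xy)^kx=x(yx)^k$, the display rewrites as $\rho\big(x(yx)^{k(p-l)}y\big)\alpha>\rho\big(x\,(x(yx)^k)^{p-l}\big)\alpha$, and cancelling the leading $\rho(x)$ gives $\rho\big((yx)^{k(p-l)}y\big)\alpha>\rho\big((x(yx)^k)^{p-l}\big)\alpha$. Deleting the leading $x$ of a block strictly decreases the value (since $\rho(x)\gamma>\gamma$), so $\rho\big((x(yx)^k)^{j}\big)\alpha>\rho\big((yx)^{kj}\big)\alpha$ for all $\alpha$ and $j\ge1$; with $j=p-l-1$ this turns the factorization $(x(yx)^k)^{p-l}=(x(yx)^k)^{p-l-1}\cdot x(yx)^k$ into $\rho\big((x(yx)^k)^{p-l}\big)\alpha>\rho\big((yx)^{k(p-l-1)}x(yx)^k\big)\alpha$. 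Chaining with the previous inequality and then cancelling the common left factor $\rho\big((yx)^{k(p-l-1)}\big)$ from $\rho\big((yx)^{k(p-l-1)}(yx)^ky\big)\alpha>\rho\big((yx)^{k(p-l-1)}x(yx)^k\big)\alpha$ yields $\rho\big((yx)^ky\big)\alpha>\rho\big(x(yx)^k\big)\alpha$ for all $\alpha$ (for $p-l=1$ this is already the display above). In particular $\rho\big((yx)^ky\big)\alpha>\rho\big(x(yx)^k\big)\alpha>\rho\big((yx)^k\big)\alpha$, so cancelling $\rho\big((yx)^k\big)$ gives $\rho(y)\alpha>\alpha$.

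Rewriting the single-block inequality as $\rho\big(y(xy)^k\big)\alpha>\rho\big((xy)^kx\big)\alpha$ (via $(yx)^ky=y(xy)^k$ and $x(yx)^k=(xy)^kx$), \cite[Lemma 2.4]{Tr} upgrades it to $\rho\big((yx)^ky^n\big)\alpha>\rho\big(x^n(yx)^k\big)\alpha$ and $\rho\big(y^n(xy)^k\big)\alpha>\rho\big((xy)^kx^n\big)\alpha$ for all $\alpha$ and $n\ge1$, which are the remaining claims. The main obstacle is the core step: unlike in the $pk-1$ case, the words $(xy)^{k(p-l)+1}$ and $x((xy)^kx)^{p-l}$ do not share an obvious long common factor, so the reduction depends on the auxiliary estimate $\rho\big((x(yx)^k)^{j}\big)\alpha>\rho\big((yx)^{kj}\big)\alpha$ together with careful tracking of which factors may be cancelled and of the direction of each inequality. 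After the single-block inequality is in hand, the remainder is a transcription of the $pk-1$ argument.
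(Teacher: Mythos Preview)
Your proposal is correct and follows essentially the same approach the paper intends: the paper gives no explicit proof of Lemma~\ref{ineq+1}, merely stating that it is ``similar to the previous case,'' and your argument is precisely the analog of the proof of Lemma~\ref{ineq-1}---use \eqref{rel+1} together with $\rho(x)>\mathrm{id}$ to obtain the first displayed inequality, reduce to the single-block inequality $\rho((yx)^ky)\alpha>\rho(x(yx)^k)\alpha$, deduce $\rho(y)\alpha>\alpha$, and invoke \cite[Lemma~2.4]{Tr} for the remaining two inequalities. Your reduction step (rewriting via $(yx)$ and using $\rho((x(yx)^k)^{j})>\rho((yx)^{kj})$ to create a cancellable left factor) is the natural counterpart of the paper's step in Lemma~\ref{ineq-1} (using $\rho((yx)^k)>\rho((yx)^{k-1}y)$ to create a cancellable right factor); the bookkeeping differs only cosmetically.
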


\begin{lemma} \label{genus+1}
If $T^{l,m}_{p,pk+1}$ is an L-space knot, then its genus is equal to $\frac{1}{2}l(l-1)m + \frac{1}{2} p(p-1)k.$
\end{lemma}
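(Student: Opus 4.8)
The plan is to mimic the proof of Lemma~\ref{genus-1}. By the theorem of Ozsv\'ath--Szab\'o cited there, the Alexander polynomial of an L-space knot has degree equal to twice its genus, so it suffices to prove that the degree of the Alexander polynomial of $T^{l,m}_{p,pk+1}$ is $l(l-1)m + p(p-1)k$; the L-space hypothesis enters only here, the degree computation being valid for all $p,k,l,m$ in the stated range. I would carry out this computation by Fox's free calculus.

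First I would put $G^{l,m}_{p,pk+1}$ in the form $\la x,y \mid r_1 = r_2\ra$ with $r_1,r_2$ chosen to keep the Fox derivatives organized. Since $z = b^{k(p-l)+1}a^{l-p}$ we have $(xy)^{k(p-l)+1} = b^{k(p-l)+1} = z\big((xy)^k x\big)^{p-l}$, so cancelling $\big((xy)^k x\big)^{p-l}$ on the right of \eqref{rel+1} yields the relation $r_1 = r_2$ with
\[
r_1 = \big( x z^m (xy)^k \big)^{l-1} x, \qquad r_2 = \big( z^m (xy)^k \big)^{l-1} z,
\]
where $z = (xy)^{k(p-l)+1}\big( (xy)^k x \big)^{l-p}$. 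The abelianization $\Phi\colon G^{l,m}_{p,pk+1}\to\BZ\cong\la t\ra$ sends $x\mapsto t$ and $y\mapsto t^{p-1}$, whence $\Phi(b) = t^p$, $\Phi(a) = t^{pk+1}$, and a short computation (as in the proof of Lemma~\ref{genus-1}) gives $\Phi(z) = t^l$.

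Next I would expand $\frac{\p r_1}{\p x} - \frac{\p r_2}{\p x}$ using the product, inverse and power rules for Fox derivatives, writing $\frac{\p u^{\pm n}}{\p x}$ through the sums $\delta_n(u) = \sum_{i=0}^n u^i$ exactly as in the proof of Lemma~\ref{genus-1}, substitute $\frac{\p a}{\p x}$, $\frac{\p b}{\p x}$, $\frac{\p z}{\p x}$ computed from $b = xy$, $a = (xy)^k x$ and $z = b^{k(p-l)+1}a^{l-p}$, and apply $\Phi$. The proof then comes down to a degree comparison: one identifies the unique monomials of highest and of lowest degree in the Laurent polynomial $\Phi\big(\frac{\p r_1}{\p x} - \frac{\p r_2}{\p x}\big)$ and checks that they do not cancel, with the outcome that its degree is $l(l-1)m + p(p-1)k + p - 2$. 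Feeding this into the analogue of formula \eqref{Alex} for $G^{l,m}_{p,pk+1}$, whose denominator $\Phi(y)-1$ has degree $p-1$, we obtain
\[
\deg\Delta_{T^{l,m}_{p,pk+1}} = 1 + \big(l(l-1)m + p(p-1)k + p - 2\big) - (p-1) = l(l-1)m + p(p-1)k,
\]
so the genus is $\tfrac12 l(l-1)m + \tfrac12 p(p-1)k$, as claimed.

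As in Lemma~\ref{genus-1}, the main obstacle is the final degree comparison. The top degree should come from the summand $\big( z^m(xy)^k\big)^{l-1}\frac{\p z}{\p x}$ of $\frac{\p r_2}{\p x}$ together with the extremal term of $\Phi\big(\frac{\p z}{\p x}\big)$ produced by the negative exponent $l-p$ in $z$; one must verify that nothing in $\frac{\p r_1}{\p x}$ reaches that high. At the bottom end there is an extra subtlety not present in the $pk-1$ case: the constant terms of $\Phi\big(\frac{\p r_1}{\p x}\big)$ and $\Phi\big(\frac{\p r_2}{\p x}\big)$ both equal $1$ and cancel, so the lowest surviving monomial sits in degree $1$ (it comes from differentiating an explicit $x$-occurrence against the constant term of $\Phi\big(\frac{\p z}{\p x}\big)$), and one has to check that this monomial is not killed either. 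Once the presentation above is fixed, everything else is a routine if lengthy free-calculus computation.
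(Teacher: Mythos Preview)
Your proposal is correct and follows exactly the route the paper intends: the paper omits the proof of Lemma~\ref{genus+1}, saying only that it is similar to Lemma~\ref{genus-1}, and you carry out precisely that analogue via Fox calculus on a two-generator presentation, identifying the extremal monomials of $\Phi\big(\tfrac{\partial r_1}{\partial x}-\tfrac{\partial r_2}{\partial x}\big)$ and reading off the degree of the Alexander polynomial. Your observation that the constant terms cancel (so the bottom degree is $1$ rather than $0$) is the one genuinely new feature compared with the $pk-1$ case, and your identification of the top-degree contribution from $(z^m(xy)^k)^{l-1}\tfrac{\partial z}{\partial x}$ is correct.
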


\subsection{Longtitude of L-space $T^{l,m}_{p,pk \pm 1}$} From now on we let $K$ denote an L-space twisted torus knot $T^{l,m}_{p, pk \pm 1}.$ Since $K$ is an L-space, by \cite{Va}, one of the following conditions holds

$\quad$ (1) $l=p-1$, 

$\quad$ (2) $l=p-2$ and $m=1$, 

$\quad$ (3) $l=2$ and $m=1$. 

\begin{lemma} \label{long}
If $K=T^{p-1,m}_{p, pk - 1}$ then $\lambda' =(yx)^{k-1} y^{m+1} x (yx)^{k-1} (y^m (yx)^k)^{p-2}.$

If $K=T^{p-1,m}_{p, pk + 1}$ then $\lambda' =   ((xy)^{k} x y^m )^{p-1} (xy)^k x.$

If $K=T^{p-2,1}_{p, pk - 1}$ then $\lambda' = x (yx)^{k-1} (y (yx)^{k-1}y )^{p-2} (yx)^{k-1}y.$

If $K=T^{p-2,1}_{p, pk + 1}$ then $\lambda' =  (xy)^{k} x (  y (xy)^{k} )^{p-2} (xy)^k x.$

If $K=T^{2,1}_{p, pk - 1}$ then
\begin{eqnarray*} \label{long}
\lambda' &=&( (yx)^{k-1} y )^{p-2}  [ (yx)^{1-(p-2)k} ( (yx)^{k-1} y )^{p-1} ]^{2}  \\
  &=& x ( (yx)^{k-1}y )^{p-1}x ( (yx)^{k-1} y )^{-(p-1)}  (yx)^k ( (yx)^{k-1}y )^{p-1} x.
\end{eqnarray*} 
  
If $K=T^{2,1}_{p, pk + 1}$ then 
\begin{eqnarray*}
\lambda' &=&  (xy)^{(p-2)k+1} ( (xy)^{k} x)^{-(p-3)}  (xy)^{(p-1)k+1} x \\
&=&  x (xy)^{(p-1)k+1}  x  (xy)^{-(p-1)k-1}  (xy)^{k} x (xy)^{(p-1)k+1}  x.
\end{eqnarray*} 
\end{lemma}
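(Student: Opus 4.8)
The plan is to verify each of the six displayed identities (all understood as equalities in the knot group $G^{l,m}_{p,pk\pm1}$) by a direct computation, starting in every case from the longitude formula recorded in Section~\ref{pre-1} or~\ref{pre+1} --- namely $\lambda'=a^{p-l}(z^{m}a)^{l}$ in the $pk-1$ case and $\lambda'=(z^{m}a)^{l}a^{p-l}$ in the $pk+1$ case --- together with the substitutions $b=yx$, $a=(yx)^{k-1}y$, $z=b^{1-k(p-l)}a^{p-l}$ in the first case, and $b=xy$, $a=(xy)^{k}x$, $z=b^{k(p-l)+1}a^{l-p}$ in the second. In each of the three L-space situations one inserts the value of $p-l$, which is $1$, $2$ or $p-2$, and of $m$, which is arbitrary, $1$ or $1$, and then rewrites $\lambda'$ as a word in $x$ and $y$, simplifying with the group relation \eqref{rel-1} or \eqref{rel+1} and with the elementary identities $(yx)^{k}=\bigl((yx)^{k-1}y\bigr)x$ --- equivalently $\bigl((yx)^{k-1}y\bigr)^{-1}(yx)^{k}=x$ --- and $(xy)^{k}=x\bigl((yx)^{k-1}y\bigr)$.

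When $l=p-1$ one has $p-l=1$, so $z=b^{1-k}a$; in the $pk-1$ case this is literally $y$, hence $\lambda'=a(y^{m}a)^{p-1}$, and in the $pk+1$ case it is the conjugate $aya^{-1}$ with $a=(xy)^{k}x$, hence $\lambda'=(z^{m}a)^{p-1}a$. In the $pk-1$ case I would peel one factor $y^{m}a$ off the power, observe that what remains, $a(y^{m}a)^{p-2}$, is exactly the left-hand side of \eqref{rel-1} for $l=p-1$, replace it by $x(yx)^{k-1}(y^{m}(yx)^{k})^{p-2}$, and collect $ay^{m}=(yx)^{k-1}y^{m+1}$ to obtain the stated word. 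In the $pk+1$ case one checks directly from $z=aya^{-1}$ and $a=(xy)^{k}x$ that $z^{m}a=ay^{m}$, so that $(z^{m}a)^{p-1}a=(ay^{m})^{p-1}a$ is already in the stated form and no relation is needed. For $l=p-2$ (so $p-l=2$, $m=1$) one has $\lambda'=a^{2}(za)^{p-2}$ in the $pk-1$ case and $\lambda'=(za)^{p-2}a^{2}$ in the $pk+1$ case; peeling off one factor $za$ and applying \eqref{rel-1} or \eqref{rel+1} for $l=p-2$ to the remaining $a^{2}(za)^{p-3}$, then substituting $z$ and cancelling the resulting powers of $yx$ or $xy$, yields the stated word.

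The longest computation is the case $l=2$ (so $p-l=p-2$, $m=1$). Substituting $z=(yx)^{1-(p-2)k}\bigl((yx)^{k-1}y\bigr)^{p-2}$ into $\lambda'=a^{p-2}(za)^{2}$ gives the first displayed form immediately. For the second, I note that \eqref{rel-1} for $l=2$ reads $a^{p-2}(za)=x\,a^{p-2}(yx)^{k}$ once the powers of $yx$ in it cancel; applying this to the leading block of $\lambda'=a^{p-2}(za)(za)$ and then once more, in the rearranged form $za=a^{-(p-2)}x\,a^{p-2}(yx)^{k}$, to the remaining factor $za$, I get $\lambda'=x\,a^{p-2}(yx)^{k}\,a^{-(p-2)}\,x\,a^{p-2}(yx)^{k}$. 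Now using $(yx)^{k}=ax$ to replace the two blocks $a^{p-2}(yx)^{k}$ by $a^{p-1}x$ and the block $a^{-(p-2)}x$ by $a^{-(p-1)}(yx)^{k}$ gives precisely the second displayed form; the $pk+1$ subcase is handled identically with \eqref{rel+1} in place of \eqref{rel-1}. I expect the only real obstacle to be the bookkeeping in this $l=2$ case --- choosing where, and in which rearranged shape, to apply the relation, and keeping the powers of $yx$ (or $xy$) straight; the remaining five cases reduce to a single use of the relation followed by the cancellation of powers of $yx$ or $xy$.
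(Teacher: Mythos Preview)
Your plan is correct and matches the paper's argument almost exactly: in each case you start from $\lambda'=a^{p-l}(z^{m}a)^{l}$ or $(z^{m}a)^{l}a^{p-l}$, substitute the appropriate expression for $z$, and apply the relation \eqref{rel-1} or \eqref{rel+1} once (twice for $l=2$) together with the identity $(yx)^{k}=ax$ (resp.\ $a=(xy)^{k}x$).

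One small divergence worth noting: for $K=T^{p-2,1}_{p,pk+1}$ the paper does \emph{not} use the relation at all. It simply observes that $za=(xy)^{2k+1}a^{-1}=a\bigl(y(xy)^{k}\bigr)a^{-1}$ is a conjugate, so $(za)^{p-2}a^{2}=a\bigl(y(xy)^{k}\bigr)^{p-2}a$ immediately. Your proposal to peel off one $za$ and apply \eqref{rel+1} to the remaining $(za)^{p-3}a^{2}$ (not $a^{2}(za)^{p-3}$ --- the order is reversed in the $pk+1$ case) would also work, since \eqref{rel+1} for $l=p-2$ rewrites as $x(za)^{p-3}a^{2}=(zax^{-1})^{p-3}(xy)^{2k+1}$, but the subsequent simplification is noticeably messier than the conjugation trick. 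Everything else --- including your clean rewriting of \eqref{rel-1} for $l=2$ as $a^{p-2}(za)=xa^{p-2}(yx)^{k}=xa^{p-1}x$ and the two-fold application leading to $x a^{p-1}x\,a^{-(p-1)}(yx)^{k}a^{p-1}x$ --- is exactly what the paper does.
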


\begin{proof}
If $K=T^{p-1,m}_{p, pk - 1}$ then, by Section \ref{pre-1}, we have $z = y$ and $\lambda' = (yx)^{k-1} y  ( y^m (yx)^{k-1} y )^{p-1}.$ The relation  \eqref{rel-1} is $(yx)^{k-1} y (y^{m}  (yx)^{k-1}y )^{p-2} = x (yx)^{k-1} (y^m (yx)^k)^{p-2}.$ Hence 
\begin{eqnarray*}
\lambda' &=&  (yx)^{k-1} y  y^m (yx)^{k-1} y ( y^m (yx)^{k-1} y )^{p-2}  \\
&=& (yx)^{k-1} y^{m+1} x (yx)^{k-1} (y^m (yx)^k)^{p-2}.
\end{eqnarray*} 

If $K=T^{p-1,m}_{p, pk + 1}$ then, by Section \ref{pre+1}, we have $z = ((xy)^{k} x) y ((xy)^{k} x)^{-1}$ and $\lambda' =  ( (xy)^{k} x y^m )^{p-1} (xy)^k x.$

If $K=T^{p-2,1}_{p, pk - 1}$ then, by Section \ref{pre-1}, we have $z = (yx)^{1-2k} ( (yx)^{k-1}y )^{2} = (yx)^{-k} y (yx)^{k-1}y$ and $\lambda' = ((yx)^{k-1} y)^2 (z (yx)^{k-1} y)^{p-2}.$
The relation \eqref{rel-1} is $((yx)^{k-1} y)^{2} (z(yx)^{k-1}y )^{p-3} = x (yx)^{2k-1} (z (yx)^k)^{p-3}.$ Hence 
\begin{eqnarray*}
\lambda' &=& ( (yx)^{k-1} y )^{2} (z   (yx)^{k-1} y )^{p-3} z   (yx)^{k-1} y\\
  &=& x (yx)^{2k-1} (z  (yx)^k )^{p-3} z(yx)^{k-1}y \\
    &=& x (yx)^{k-1} ( (yx)^k z)^{p-2} (yx)^{k-1}y \\
  &=& x (yx)^{k-1} (y (yx)^{k-1}y)^{p-2} (yx)^{k-1}y.
\end{eqnarray*} 

If $K=T^{p-2,1}_{p, pk + 1}$ then, by Section \ref{pre+1}, we have $z = (xy)^{2k+1} ( (xy)^{k}x )^{-2}$ and 
\begin{eqnarray*}
\lambda' &=& [ (xy)^{2k+1} ( (xy)^{k}x )^{-1} ]^{p-2} ((xy)^k x)^2 \\
   &=& [ (xy)^{k}x y (xy)^k  ( (xy)^{k}x )^{-1} ]^{p-2} ((xy)^k x)^2\\
   &=& (xy)^{k}x (y (xy)^k)^{p-2}  ( (xy)^{k}x )^{-1} ((xy)^k x)^2 \\
   &=& (xy)^{k} x (  y (xy)^{k} )^{p-2} (xy)^k x.
\end{eqnarray*} 

If $K=T^{2,1}_{p, pk - 1}$ then, by Section \ref{pre-1}, we have $z = (yx)^{1-(p-2)k} ( (yx)^{k-1}y )^{p-2}$ and 
$$
\lambda' =( (yx)^{k-1} y )^{p-2}  [ (yx)^{1-(p-2)k} ( (yx)^{k-1} y )^{p-1} ]^{2}.
$$
The relation \eqref{rel-1} is $( (yx)^{k-1} y )^{p-2} (yx)^{1-(p-2)k}  ( (yx)^{k-1} y )^{p-1} =  x ( (yx)^{k-1}y )^{p-1} x.$
Hence
\begin{eqnarray*}
\lambda' 
 &=& ( (yx)^{k-1} y )^{p-2}  [  ( (yx)^{k-1} y)^{-(p-2)} x ( (yx)^{k-1}y )^{p-1} x ]^2 \\
 &=& x ( (yx)^{k-1}y )^{p-1} x  ( (yx)^{k-1} y)^{-(p-2)} x ( (yx)^{k-1}y )^{p-1} x  \\
 &=& x ( (yx)^{k-1}y )^{p-1}x ( (yx)^{k-1} y )^{-(p-1)}  (yx)^k ( (yx)^{k-1}y )^{p-1} x.
\end{eqnarray*} 

If $K=T^{2,1}_{p, pk + 1}$ then, by Section \ref{pre+1}, we have $z = (xy)^{(p-2)k+1} ( (xy)^{k}x )^{-(p-2)}$ and 
$$
\lambda' =  [ (xy)^{(p-2)k+1} ( (xy)^{k} x )^{-(p-3)} ]^{2} ( (xy)^{k} x )^{p-2}.
$$
The  relation \eqref{rel+1} is
$x (xy)^{(p-1)k+1}  x  =  (xy)^{(p-2)k+1} ( (xy)^{k}x )^{-(p-2)}  (xy)^{(p-1)k+1}.$
Hence
\begin{eqnarray*}
\lambda' 
&=& (xy)^{(p-2)k+1} ( (xy)^{k} x )^{-(p-3)} (xy)^{(p-2)k+1}  (xy)^{k} x \\
&=& (xy)^{(p-2)k+1} ( (xy)^{k} x )^{-(p-2)} (xy)^{k} x (xy)^{(p-1)k+1}  x \\
&=&  x (xy)^{(p-1)k+1}  x  (xy)^{-(p-1)k-1}  (xy)^{k} x (xy)^{(p-1)k+1}  x.
\end{eqnarray*}
This completes the proof of Lemma \ref{long}.
\end{proof}

\section{Non left-orderable surgeries} \label{pf}

In this section we prove Theorem \ref{main}. Recall that $K$ is an L-space twisted torus knot $T^{l,m}_{p, pk \pm 1}.$ Let $M_{\frac{r}{s}}$ be the 3-manifold obtained by $\frac{r}{s}$-surgery along $K$. Assume $\pi_1(M_{\frac{r}{s}})$ is left-orderable for some $\frac{r}{s} \ge 2g(K)-1$, where $s>0$. Then there exists a homomorphism $\rho: \pi_1(M_\frac{r}{s}) \to \text{Homeo}^+(\BR)$ such that there is no $\alpha \in \BR$ satisfying $\rho(h)(\alpha) = \alpha$ for all $h \in \pi_1(M)$, see e.g. \cite[Problem 2.25]{CR}. From now on we write $h \alpha$ for $\rho(h)(\alpha)$.

Since $x^r \lambda^s =1$ in $\pi_1(M)$ and $x\lambda = \lambda x$, there exists an element $v \in \pi_1(M)$ such that $x= v^s$ and $\lambda = v^{-r}$, see e.g. \cite[Lemma 3.1]{Nakae}. 

\begin{proposition}
We have $v \alpha \not=\alpha$ for all $\alpha \in \BR$.
\end{proposition}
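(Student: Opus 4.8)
The plan is to argue by contradiction: suppose there is some $\alpha_0 \in \BR$ with $v\alpha_0 = \alpha_0$. Since $x = v^s$ we immediately get $x\alpha_0 = \alpha_0$, and since $\lambda = v^{-r}$ we get $\lambda\alpha_0 = \alpha_0$ as well. The heart of the matter is then to propagate the fixed point of $v$ (equivalently of $x$) to a global fixed point of the whole group $\pi_1(M)$, contradicting the choice of $\rho$. Because $\pi_1(M)$ is generated by $\pi_1(S^3 \setminus K)$ together with the surgery relation, and the knot group is generated by $x$ and $y$ (in either of the two presentations from Sections \ref{pre-1} and \ref{pre+1}), it suffices to show $y\alpha_0 = \alpha_0$; then every element of $\pi_1(M)$ fixes $\alpha_0$.

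First I would reduce to the case $x\alpha > \alpha$ for all $\alpha$ (or the symmetric case $x\alpha < \alpha$ for all $\alpha$). The standard dichotomy for a single element of $\text{Homeo}^+(\BR)$ says: either $x$ has a fixed point, or $x\alpha > \alpha$ for all $\alpha$, or $x\alpha < \alpha$ for all $\alpha$. If $x$ has a fixed point, one takes the fixed set $\mathrm{Fix}(x) \subset \BR$, which is a nonempty closed set; since $x = v^s$, $v$ preserves $\mathrm{Fix}(x)$, and since $\lambda = v^{-r}$ commutes with $x$, $\lambda$ also preserves $\mathrm{Fix}(x)$. The subtle point — and this is where I expect the main work to lie — is to show that in fact $x$ has \emph{no} fixed point under our standing hypothesis, or else to run the argument directly on $\mathrm{Fix}(x)$. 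Here is the cleaner route: assume $x$ has a fixed point $\beta$. Then $v^s\beta = \beta$, so $v\beta$ lies in $\mathrm{Fix}(x)$ too; iterating, the $v$-orbit of $\beta$ sits inside $\mathrm{Fix}(x)$. If $v$ itself had no fixed point it would move every point of $\mathrm{Fix}(x)$ monotonically, but $\mathrm{Fix}(x)$ is $v$-invariant and closed, and one extracts a fixed point of $v$ as a limit point of a bounded monotone orbit (or at $\pm\infty$ inside the closed invariant set) — but this contradicts our assumption $v\alpha_0 = \alpha_0$ being the \emph{only} constraint we may use is the wrong way round; rather, the existence of \emph{a} fixed point of $v$ is exactly what we have assumed, so instead the argument is: pick the assumed fixed point $\alpha_0$ of $v$; I claim $x\alpha_0 = \alpha_0$, which is clear, and now I want $y\alpha_0 = \alpha_0$. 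So the reduction I actually need is: \emph{either} $x$ fixes $\alpha_0$ and I proceed to show $y$ does too, \emph{or} I conjugate/translate so that without loss of generality $x\alpha > \alpha$ for all $\alpha$ — but that second alternative is incompatible with $x\alpha_0 = \alpha_0$. Hence the only live case is $x\alpha_0 = \alpha_0$, and the task is purely to deduce $y\alpha_0 = \alpha_0$ from the group relation evaluated at $\alpha_0$, using $\lambda\alpha_0 = \alpha_0$.

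To do that I would invoke the inequality lemmas (Lemma \ref{ineq-1} or Lemma \ref{ineq+1}) in a limiting form. Those lemmas are stated for $\rho(x)\alpha > \alpha$ for \emph{all} $\alpha$; the point is that they are proved purely from the group relation and are invariant under replacing $x$ by a conjugate, so by a standard approximation/compactness argument (perturbing $\rho$ or using that the fixed point $\alpha_0$ is a limit of points where $x$ acts with the strict inequality on one side) the non-strict versions hold at $\alpha_0$: $y\alpha_0 \ge \alpha_0$, and likewise $y^{-1}\alpha_0 \ge \alpha_0$ from the mirror/inverse symmetry. Combining these forces $y\alpha_0 = \alpha_0$. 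Concretely, the condition $\frac{r}{s} \ge 2g(K) - 1$ enters precisely through the longitude formulas of Lemma \ref{long}: writing $\lambda = x^{-(p(pk\pm1)+l^2m)}\lambda'$ and using the explicit word for $\lambda'$ together with the fact that the exponent sum $\frac{r}{s}$ of the surgery slope dominates $2g(K)-1 = l(l-1)m + p(p-1)k - (2p-2)$ or $l(l-1)m + p(p-1)k - 1$, one checks that the relation $x^r\lambda^s = 1$, evaluated at $\alpha_0$ via the inequality lemma applied to each syllable of $\lambda'$, pins all the $y$-translates to $\alpha_0$. The main obstacle is making the passage from the strict "for all $\alpha$" hypothesis of Lemmas \ref{ineq-1} and \ref{ineq+1} to the non-strict conclusion at the single fixed point $\alpha_0$ rigorous — this is a dynamical continuity argument that must be set up carefully — and then bookkeeping the longitude word so that the genus bound is exactly what is needed to close the inequality chain.
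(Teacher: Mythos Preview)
Your proposal has a genuine gap, and it also misses the much simpler route the paper takes.

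The paper's argument is direct and does not use Lemmas \ref{ineq-1} or \ref{ineq+1}, nor the slope condition $\frac{r}{s}\ge 2g(K)-1$, at all. From $v\alpha_0=\alpha_0$ one gets $x\alpha_0=\alpha_0$ and $\lambda'\alpha_0=\alpha_0$. If $y\alpha_0=\alpha_0$ we are done, so assume (after possibly reversing the order) $y\alpha_0>\alpha_0$. Now one simply evaluates the explicit word for $\lambda'$ from Lemma \ref{long} at $\alpha_0$: every $x^{\pm 1}$ acts as the identity there, so only the $y$-letters matter. In the four cases $T^{p-1,m}_{p,pk\pm1}$ and $T^{p-2,1}_{p,pk\pm1}$, Lemma \ref{long} exhibits $\lambda'$ as a word in $x,y$ with all $y$-exponents positive, so $\lambda'\alpha_0>\alpha_0$, a contradiction. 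In the two cases $T^{2,1}_{p,pk\pm1}$, where $\lambda'$ is not a positive word, the paper uses \emph{both} expressions for $\lambda'$ given in Lemma \ref{long}: from the second expression and $\lambda'\alpha_0=\alpha_0$ one deduces an inequality (for instance $(yx)u\alpha_0<u\alpha_0$ with $u=((yx)^{k-1}y)^{p-1}$), and plugging this back into the first expression forces $\lambda'\alpha_0>\alpha_0$, again a contradiction. No perturbation, no limits, no genus bound.

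Your route breaks down at the step where you try to invoke Lemmas \ref{ineq-1} and \ref{ineq+1} ``in a limiting form.'' Those lemmas have as a standing hypothesis that $\rho(x)\alpha>\alpha$ for \emph{all} $\alpha$, and their proofs use this globally (repeatedly replacing $(yx)^{k-1}y$ by $(yx)^k$ at arbitrary arguments). At $\alpha_0$ that hypothesis fails outright, and you cannot ``perturb $\rho$'' while keeping it a homomorphism of the surgered group into $\text{Homeo}^+(\BR)$. Approaching $\alpha_0$ from one side also does not help: the lemmas give conclusions at a given $\alpha$ only under the global hypothesis, not pointwise, so continuity in $\alpha$ does not yield non-strict versions at $\alpha_0$. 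The claimed ``mirror/inverse symmetry'' giving $y^{-1}\alpha_0\ge\alpha_0$ is not available either: the presentations in Sections \ref{pre-1}--\ref{pre+1} are not symmetric under $y\mapsto y^{-1}$. Finally, note that the slope condition $\frac{r}{s}\ge 2g(K)-1$ is irrelevant for this Proposition; it is used only later, in Proposition \ref{key0}.
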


\begin{proof}
Assume $v \alpha=\alpha$ for some $\alpha \in \BR$. Then $x\alpha=v^s \alpha=\alpha$ and $\lambda \alpha=v^{-r} \alpha=\alpha.$ Since $\lambda' = x^{p(pk \pm 1) + l^2 m}  \lambda$, we have
$\lambda' \alpha = \alpha.$ If $y\alpha=\alpha$ then $h\alpha=\alpha$ for all $h \in \pi_1(M)$, a contradiction. Without loss of generality, we assume that $y\alpha > \alpha$. 

If $K= T^{p-1,m}_{p,pk \pm 1}$ or $K= T^{p-2,1}_{p,pk \pm 1}$ then, by Lemma \ref{long}, all the exponents of $y$ in $\lambda'$ are positive. Since $x \alpha = \alpha$ and $y\alpha > \alpha$, we conclude that $\lambda' \alpha > \alpha.$ This contradicts $\lambda' \alpha = \alpha.$

If $K= T^{2,1}_{p,pk - 1}$ then, by Lemma \ref{long}, we have 
\begin{eqnarray*}
\lambda' &=&( (yx)^{k-1} y )^{p-2}  [ (yx)^{1-(p-2)k} ( (yx)^{k-1} y )^{p-1} ]^{2}  \\
  &=& x ( (yx)^{k-1}y )^{p-1}x ( (yx)^{k-1} y )^{-(p-1)}  (yx)^k ( (yx)^{k-1}y )^{p-1} x.
\end{eqnarray*} 
Let $u=( (yx)^{k-1}y )^{p-1}$. Since $x \alpha = \alpha$ and $y\alpha > \alpha$ we have $u \alpha > \alpha$. Hence 
\begin{eqnarray*}
&&\alpha = \lambda' \alpha = x u xu^{-1} (yx)^k u x \alpha \\
\Rightarrow && \alpha = ux u^{-1} (yx)^k u \alpha \\
\Rightarrow &&  (yx)^k u \alpha = u x^{-1} u^{-1} \alpha < u x^{-1}  \alpha = u \alpha \\
\Rightarrow &&  (yx) u \alpha < u \alpha \\
\Rightarrow &&  (yx)^{-1} u \alpha > u \alpha.
\end{eqnarray*}
This implies that  $(yx)^{1-(p-2)k} ( (yx)^{k-1} y )^{p-1} \alpha = (yx)^{1-(p-2)k} u \alpha > u \alpha > \alpha.$ Hence 
$$\lambda' \alpha = ( (yx)^{k-1} y )^{p-2}  [ (yx)^{1-(p-2)k} ( (yx)^{k-1} y )^{p-1} ]^{2} \alpha > \alpha.$$
This contradicts $\lambda' \alpha =\alpha.$

If $K= T^{2,1}_{p,pk + 1}$ then, by Lemma \ref{long}, we have 
\begin{eqnarray*} 
\lambda' &=& (xy)^{(p-2)k+1} ( (xy)^{k} x )^{-(p-3)}  (xy)^{(p-1)k+1}  x \\
&=&  x (xy)^{(p-1)k+1}  x  (xy)^{-(p-1)k-1}  (xy)^{k} x (xy)^{(p-1)k+1}  x.
\end{eqnarray*} 
Let $u=(xy)^{(p-1)k+1}$.  Since $x \alpha = \alpha$ and $y\alpha > \alpha$ we have $u \alpha > \alpha$. Hence 
\begin{eqnarray*}
&&\alpha = \lambda' \alpha = x u x u^{-1}  (xy)^k x u x \alpha \\
\Rightarrow && \alpha = u x u^{-1}  (xy)^k x u \alpha \\
\Rightarrow &&   (xy)^k x  u \alpha = u x^{-1} u^{-1} \alpha < ux^{-1}  \alpha = u \alpha \\
\Rightarrow && ( (xy)^k x )^{-1} u \alpha > u \alpha.
\end{eqnarray*}
This implies that $( (xy)^{k} x )^{-(p-3)}  (xy)^{(p-1)k+1}  \alpha = ( (xy)^{k} x)^{-(p-3)} u  \alpha > u \alpha > \alpha.$ Hence $$\lambda' \alpha =(xy)^{(p-2)k+1} ( (xy)^{k} x )^{-(p-3)}  (xy)^{(p-1)k+1}  x \alpha> \alpha.$$
This contradicts $\lambda' \alpha =\alpha.$
\end{proof}

Since $v\alpha \not=\alpha$ for all $\alpha \in \BR$ and $v\alpha$ is a continuous function of $\alpha$, without loss of generality, we may assume $v \alpha > \alpha$ for all $\alpha \in \BR$. Then $x \alpha = v^s \alpha > \alpha.$

\begin{proposition} \label{key0}
With $\frac{r}{s} \ge 2 g(K)-1$ we have $\alpha < y \alpha< x \alpha$ for all $\alpha \in \BR$.
\end{proposition}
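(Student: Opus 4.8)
The plan is to prove the two inequalities $\alpha<y\alpha$ and $y\alpha<x\alpha$ separately. The first is immediate: composing $\rho$ with the surjection $G^{l,m}_{p,pk\pm1}\twoheadrightarrow\pi_{1}(M_{r/s})$ and using the already established fact $x\alpha>\alpha$ for all $\alpha$, we are exactly in the setting of Lemma \ref{ineq-1} (if $K=T^{l,m}_{p,pk-1}$) or Lemma \ref{ineq+1} (if $K=T^{l,m}_{p,pk+1}$), which yields $y\alpha>\alpha$ for all $\alpha$.

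For $y\alpha<x\alpha$ I would feed the surgery slope into the longitude. Since $x=\mu=v^{s}$, $\lambda=v^{-r}$, and $\lambda'=\mu^{\,p(pk\pm1)+l^{2}m}\lambda$, one has $\lambda'=v^{\,s(p(pk\pm1)+l^{2}m)-r}$; using $\frac{r}{s}\ge 2g(K)-1$ together with the genus formulas of Lemmas \ref{genus-1} and \ref{genus+1}, the exponent here is at most $sD$ where $D:=p(pk\pm1)+l^{2}m-(2g(K)-1)=lm+pk+p\pm1$, and since $v$ is order-preserving this gives the pointwise bound $\lambda'\alpha\le x^{D}\alpha$ for all $\alpha$. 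I would then argue by contradiction: assuming $y\alpha_{0}\ge x\alpha_{0}$ for some $\alpha_{0}$, I would use the defining relation \eqref{rel-1} (resp. \eqref{rel+1}) together with $x\alpha>\alpha$ — reusing the kind of manipulations inside the proofs of Lemmas \ref{ineq-1} and \ref{ineq+1} — to upgrade this into a pointwise lower bound on $y$ (morally, something approaching the homological identity $y=x^{p-1}$), then substitute the explicit words for $\lambda'$ provided by Lemma \ref{long} — split into the three cases $l=p-1$, $\{l=p-2,\ m=1\}$, $\{l=2,\ m=1\}$ — and count letters to force $\lambda'\beta>x^{D}\beta$ for some $\beta$, contradicting the bound just proved.

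The main obstacle is precisely this last count. The constant $D$ is the exact threshold, so the crude estimate coming only from ``$y\ge x$ pointwise'' beats $x^{D}$ with slack to spare when $k$ is large but comes up short by a bounded amount in borderline subcases (notably $k=1$, and the case $l=p-1$); closing that gap requires exploiting the defining relation more sharply — it controls how small $y$ can be relative to $x$, and relative to $x(yx)^{k-1}$ versus $(yx)^{k-1}y$ as in Lemma \ref{ineq-1} — or else handling those subcases on their own. Arranging the three families of longitude words from Lemma \ref{long} so that a single counting scheme, balanced exactly by the genus, covers all of them is where the real work of the proof lies.
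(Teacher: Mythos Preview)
Your setup is right through the line $\lambda'\alpha\le x^{D}\alpha$ (equivalently $x\alpha\ge x^{-(D-1)}\lambda'\alpha$), and the first inequality $y\alpha>\alpha$ is indeed immediate from Lemmas~\ref{ineq-1}/\ref{ineq+1}. The gap is in the second half. Your contradiction scheme starts from $y\alpha_{0}\ge x\alpha_{0}$ at a \emph{single} point and hopes to ``upgrade'' it to a pointwise lower bound on $y$; nothing in the group relation or in Lemmas~\ref{ineq-1}/\ref{ineq+1} provides such an upgrade, and you do not indicate a mechanism. Even if you had $y\alpha\ge x\alpha$ everywhere, you already note that the naive letter count of $\lambda'$ against $x^{D}$ does not close --- so the proposal, as written, does not prove the proposition.

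The paper avoids both difficulties by arguing \emph{directly}, not by contradiction, and by using the full strength of Lemmas~\ref{ineq-1}/\ref{ineq+1} rather than just $y\alpha>\alpha$. Those lemmas give, for all $\alpha$ and all $n\ge1$, inequalities such as $(yx)^{k-1}y^{n}\alpha>x^{n}(yx)^{k-1}\alpha$, $y^{n}(xy)^{k-1}\alpha>(xy)^{k-1}x^{n}\alpha$, and the ``block'' inequalities $((yx)^{k-1}y)^{p-l}\alpha>x(yx)^{k(p-l)-1}\alpha$ (resp.\ $(xy)^{k(p-l)+1}\alpha>x((xy)^{k}x)^{p-l}\alpha$). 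Applied to the explicit words for $\lambda'$ in Lemma~\ref{long}, these let one slide $y$'s past blocks of $(yx)^{k-1}$ or $(xy)^{k}$, converting every $y$ but one into powers of $x$. In each of the six cases the resulting $x$-exponent is \emph{exactly} $D-1$, so one obtains $x\alpha\ge x^{-(D-1)}\lambda'\alpha>y\alpha$ on the nose. The point you identified as the obstacle --- that the threshold is exact --- is precisely what forces one to use these sharper shuffling inequalities instead of a crude $y\ge x$ substitution.
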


\begin{proof}
With $\frac{r}{s} \ge 2 g(K)-1$ and $s>0$, we have $-r + (2g(K)-1)s \le  0$. Since $x = v^s$, $\lambda = v^{-r}$ and $v\alpha > \alpha$ for all $\alpha \in \BR$, we have 
$$x \alpha \ge v^{-r+(2 g(K)-1)s} x\alpha = x^{2g(K)}\lambda \alpha = x^{2g(K) -p(pk \pm 1) - l^2 m }\lambda' \alpha.$$

\underline{Case 1a}: $K=T^{p-1,m}_{p, pk - 1}.$ By Lemmas \ref{genus-1} and \ref{long} we have
\begin{eqnarray*}
2g(K) &=&  (p-1)(p-2)m +p(p-1)k-(2p-2), \\
\lambda' &=& (yx)^{k-1} y^{m+1} x (yx)^{k-1} (y^m (yx)^k)^{p-2}.
\end{eqnarray*}
By Lemma \ref{ineq-1} we have $\rho(y)\alpha > \alpha$, 
$(yx)^{k-1} y^n \alpha > x^n (yx)^{k-1}   \alpha$, $y^n (xy)^{k-1} \alpha  > (xy)^{k-1}  x^n  \alpha$
for all $\alpha \in \BR$ and $n \ge 1$. 
Hence 
\begin{eqnarray*}
x \alpha &\ge&  x^{-(p-1)m - p k - (p-2)} (yx)^{k-1} y^{m+1}  x(yx)^{k-1} ( y^m (yx)^k )^{p-2} \alpha \\
&=& x^{-(m+1)} (yx)^{k-1} y^{m+1}  x(yx)^{k-1} ( y^{m+1} (xy)^{k-1}x )^{p-2} x^{-(p-2)m - p k - (p-3)}  \alpha \\
&>& x^{-(m+1)} x^{m+1} (yx)^{k-1}  x(yx)^{k-1} ( (xy)^{k-1} x^{m+1} x )^{p-2} x^{-(p-2)m - p k - (p-3)}  \alpha \\
&>& y x^{k-1} x x^{k-1} ( x^{k-1} x^{m+2}  )^{p-2} x^{-(p-2)m - p k - (p-3)} \alpha \\
&=& y \alpha.
\end{eqnarray*}

\underline{Case 1b}: $K=T^{p-1,m}_{p, pk + 1}.$ By Lemmas \ref{genus+1} and \ref{long} we have
\begin{eqnarray*}
2g(K) &=& (p-1)(p-2)m + p(p-1)k, \\
\lambda' &=& ((xy)^{k} x y^m )^{p-1} (xy)^k x.
\end{eqnarray*}
By Lemma \ref{ineq+1} we have $\rho(y)\alpha > \alpha$, $(yx)^{k} y^n \alpha > x^n (yx)^{k}   \alpha$, $y^n (xy)^{k} \alpha  > (xy)^{k}  x^n  \alpha$ for all $\alpha \in \BR$ and $n \ge 1$. 
Hence 
\begin{eqnarray*} 
x \alpha &\ge&  x^{- (p-1)m- p k - p} ( x (yx)^{k}y^{m} )^{p-1} (xy)^k x\alpha \\
&=& x^{- (p-1)m- p k - (p-1)} ( x (yx)^{k}y^{m} )^{p-1} (xy)^k x x^{-1}\alpha \\
&>& x^{- (p-1)m- p k - (p-1)} ( x x^{m} (yx)^k )^{p-1} (xy)^{k} \alpha \\
&>& x^{- (p-1)m- p k - (p-1)} ( x^{m+1} x^k )^{p-1} x^{k} y \alpha \\
&=& y \alpha.
\end{eqnarray*}

\underline{Case 2a}: $K=T^{p-2,1}_{p, pk - 1}.$ By Lemmas \ref{genus-1} and \ref{long} we have
\begin{eqnarray*}
2g(K) &=&p^2-7p+8+ p(p-1)k, \\
\lambda' &=& x (yx)^{k-1} (y (yx)^{k-1}y )^{p-2} (yx)^{k-1}y.
\end{eqnarray*}
By Lemma \ref{ineq-1} we have $\rho(y)\alpha > \alpha$, $(yx)^{k-1} y^n \alpha > x^n (yx)^{k-1}   \alpha$, $y^n (xy)^{k-1} \alpha  > (xy)^{k-1}  x^n  \alpha$, $((yx)^{k-1} y )^{2} \alpha > x (yx)^{2k-1}   \alpha$
for all $\alpha \in \BR$ and $n \ge 1$. The latter implies that $y (yx)^{k-1} y  \alpha > (yx)^{-(k-1)} x (yx)^k (yx)^{k-1}   \alpha.$
Hence 
\begin{eqnarray*}
x \alpha &\ge&  x^{-(pk+2p-4)} x (yx)^{k-1} (y (yx)^{k-1}y )^{p-2} (yx)^{k-1}y \alpha \\
&>& x^{-(pk+2p-5)}  (yx)^{k-1} [(yx)^{-(k-1)} x(yx)^k (yx)^{k-1}]^{p-2} (yx)^{k-1}y \alpha \\
&>& x^{-(pk+2p-5)}   ( x(yx)^k )^{p-2} (yx)^{k-1} (yx)^{k-1}y \alpha \\
&=& x^{-(pk+2p-5)}   ( x (yx)^{k-1} y x )^{p-2} (yx)^{k-1} y (xy)^{k-1} \alpha \\
&>& x^{-(pk+2p-5)}   ( x x (yx)^{k-1} x )^{p-2} x (yx)^{k-1} (xy)^{k-1} \alpha \\
&>& x^{-(pk+2p-5)}   (x^2 x^{k-1}x )^{p-2} x x^{k-1} x^{k-1}y \alpha \\
&=& y \alpha.
\end{eqnarray*}

\underline{Case 2b}: $K=T^{p-2,1}_{p, pk + 1}.$ By Lemmas \ref{genus+1} and \ref{long} we have
\begin{eqnarray*}
2g(K) &=& p^2-5p+6 + p(p-1)k, \\
\lambda' &=& (xy)^{k} x (y (xy)^{k})^{p-2} (xy)^k x.
\end{eqnarray*}
By Lemma \ref{ineq+1} we have $\rho(y)\alpha > \alpha$, $(yx)^{k} y^n \alpha > x^n (yx)^{k}   \alpha$, $y^n (xy)^{k} \alpha  > (xy)^{k}  x^n  \alpha$, $(xy)^{2k+1}  \alpha >  x ( (xy)^k x)^{2} \alpha$
for all $\alpha \in \BR$ and $n \ge 1.$ The latter is equivalent to $y (xy)^{2k}  \alpha > ( (xy)^k x)^{2} \alpha$, which implies that  $y (xy)^{k}  \alpha > ( (xy)^k x)^{2} (xy)^{-k}\alpha.$ Hence 
\begin{eqnarray*}
x \alpha &\ge& x^{-(pk+ 2p-2)} (xy)^{k} x (y (xy)^{k})^{p-2} (xy)^k x \alpha \\
&=& x^{-1} x (yx)^{k}  (  y (xy)^{k} )^{p-2} (xy)^k x  x^{-(pk+ 2p-3)} \alpha \\
&>&  (yx)^{k} [ (xy)^k x (xy)^k x (xy)^{-k} ]^{p-2} (xy)^{k}  x^{-(pk+ 2p-4)} \alpha \\
&=&  (yx)^{k} (xy)^k ( x (xy)^k x )^{p-2}  x^{-(pk+ 2p-4)} \alpha \\
&>& y x^{k} x^{k} (x x^{k}x)^{p-2}  x^{-(pk+ 2p-4)} \alpha \\
&=& y \alpha.
\end{eqnarray*}

\underline{Case 3a}: $K=T^{2,1}_{p, pk - 1}.$ By Lemmas \ref{genus-1} and \ref{long} we have
\begin{eqnarray*}
2g(K) &=&  p(p-1)k -(2p-4), \\
\lambda' &=& x ( (yx)^{k-1}y )^{p-1}x ((yx)^{k-1} y )^{-(p-1)}  (yx)^k ( (yx)^{k-1}y )^{p-1} x.
\end{eqnarray*}
By Lemma \ref{ineq-1} we have $\rho(y)\alpha > \alpha$, $(yx)^{k-1} y^n \alpha > x^n (yx)^{k-1}   \alpha$, $y^n (xy)^{k-1} \alpha  > (xy)^{k-1}  x^n  \alpha$, $( (yx)^{k-1} y )^{p-2}\alpha >  x (yx)^{k(p-2)-1} \alpha$
for all $\alpha \in \BR$ and $n \ge 1.$ The latter implies that $( (yx)^{k-1} y )^{p-1} x\alpha >  x (yx)^{k(p-2)-1} (yx)^{k} \alpha = x (yx)^{k-1} (yx)^{k(p-2)} \alpha.$
Hence 
\begin{eqnarray*}
x \alpha &\ge& x^{-(pk+p)} x ( (yx)^{k-1}y )^{p-1}x ((yx)^{k-1} y )^{-(p-1)}  (yx)^k ( (yx)^{k-1}y )^{p-1} x \alpha \\
&=& x^{-2}   x  ( (yx)^{k-1}y )^{p-1}x ((yx)^{k-1} y )^{-(p-1)} (yx)^k ( (yx)^{k-1}y )^{p-1} x x^{-(pk+p-2)} \alpha \\
&>& x^{-1}   (yx)^k  ( (yx)^{k-1}y )^{p-1}  x x^{-(pk+p-2)} \alpha \\
&>&  x^{-1}  (yx)^k x (yx)^{k-1}  (yx)^{k(p-2)}  x^{-(pk+p-2)} \alpha \\
&>& x^{-1}  x y x^k x x^{k-1} ( x y x^k)^{p-2}  x^{-(pk+p-2)}\alpha \\
&>& y x^{2k} x^{(k+1)(p-2)}  x^{-(pk+p-2)}\alpha \\
&=& y \alpha.
\end{eqnarray*}
Here we use $(yx)^k \alpha = (yx)^{k-1} y x \alpha > x(yx)^{k-1} x \alpha> x y x^k \alpha$ for all $\alpha \in \BR$.

\underline{Case 3b}: $K=T^{2,1}_{p, pk + 1}.$ By Lemmas \ref{genus+1} and \ref{long} we have
\begin{eqnarray*}
2g(K) &=& 2+ p(p-1)k, \\
\lambda' &=&x (xy)^{(p-1)k+1}  x  (xy)^{-(p-1)k-1}  (xy)^{k} x (xy)^{(p-1)k+1}  x.
\end{eqnarray*}
By Lemma \ref{ineq+1} we have $\rho(y)\alpha > \alpha$, $(yx)^{k} y^n \alpha > x^n (yx)^{k}   \alpha$, $y^n (xy)^{k} \alpha  > (xy)^{k}  x^n  \alpha$, $(xy)^{k(p-2)+1} \alpha > x ((xy)^k x)^{p-2}  \alpha$
for all $\alpha \in \BR$ and $n \ge 1$. The latter implies that $(xy)^{k(p-1)+1} \alpha >  ((xy)^k x)^{p-1}  \alpha.$ Hence 
\begin{eqnarray*}
x \alpha &\ge&  x^{-(pk+p+2)} x (xy)^{(p-1)k+1}  x  (xy)^{-(p-1)k-1}  (xy)^{k} x (xy)^{(p-1)k+1}  x \alpha \\
&=&  x^{-(pk+p+1)} x (xy)^{(p-1)k+1}  x  (xy)^{k}  (xy)^{-(p-1)k-1} x (xy)^{(p-1)k+1} x x ^{-1} \alpha \\
&>& x^{-(pk+p+1)} x (xy)^{(p-1)k+1}  x (xy)^k  \alpha \\
&>&  x^{-(pk+p+1)} x ((xy)^k x)^{p-1}   x (xy)^k  \alpha \\
&>& x^{-(pk+p+1)} x (x^k x)^{p-1}   x x^k  y \alpha \\
&=& y \alpha.
\end{eqnarray*}
This completes the proof of Proposition \ref{key0}. 
\end{proof}

By Lemmas \ref{ineq-1} and \ref{ineq+1} we have $(xy)^j x \alpha < y(xy)^j \alpha$ for all $\alpha \in \BR$, where $j = k-1$ if $K = T^{l,m}_{p,pk-1}$ and $j=k$ if $K = T^{l,m}_{p,pk+1}.$ With $\frac{r}{s} \ge 2 g(K)-1$, by Proposition \ref{key0} we have $x \alpha > y \alpha$ for all $\alpha \in \BR$. Hence 
\begin{eqnarray*}
(xy)^j \alpha &=& (xy)^j x x^{-1} \alpha \\
                &<& y(xy)^j x^{-1} \alpha = (yx)^j y x^{-1} \alpha \\
                &<& (yx)^j x x^{-1} \alpha = x^{-1} (xy)^j x  \alpha \\
                &<& x^{-1} y(xy)^j \alpha \\
                &<& x^{-1} x (xy)^j \alpha = (xy)^j \alpha,
\end{eqnarray*}
a contradiction. This completes the proof of Theorem \ref{main}.

\section*{Acknowledgements} 
The author has been partially supported by a grant from the Simons Foundation (\#354595 to AT).

\end{document}